\newtheorem*{thm*}{Theorem}
\newtheorem{thm}{Theorem}[section]
\newtheorem*{lemma}{Lemma}
\theoremstyle{definition}
\newtheorem*{dfn}{Definition}
\newtheorem{corollary}[thm]{Corollary}
\begin{document}

\title[Benford's Law for Coefficients of Modular Forms and Partition Functions]
{Benford's Law for Coefficients of Modular Forms and Partition Functions}
\author{Theresa C. Anderson, Larry Rolen, and Ruth Stoehr}
\address{Department of Mathematics, University of Wisconsin, Madison, Wisconsin 53706 }
\email{tcanderon2@wisc.edu} \email{lrolen@wisc.edu} \email{rstoehr@emory.edu}

\thanks {The authors are grateful for the support of the NSF through the American Recovery and Reinvestment Act:NSF Research Training Grant(PI: Ono), and would like to thank the University of Wisconsin, Madison,
particularly for the extensive help and guidance from Ken Ono and Marie Jameson.  The authors would also like to thank the referee for helping to improving the quality of exposition. Additionally, we thank Amanda Folsom, Steve Miller, and Andrew Sills for their numerous useful comments.}
 
\begin{abstract}
Here we prove that Benford's law holds for coefficients of an infinite class of modular forms. Expanding the work of Bringmann and Ono on exact formulas for harmonic Maass forms, we derive the necessary asymptotics. This implies that the unrestricted partition function $p(n)$, as well as other natural partition functions, satisfy Benford's law.
\end{abstract}
\maketitle
\noindent

\section{Introduction and Statement of Results}

 It has long been observed that many naturally occurring statistics and arithmetic functions have some surprising properties. In 1881, astronomer Simon Newcomb noticed that the earlier pages in logarithm tables were more worn than later ones. Instead of the a priori estimate that the first digit will be 1 one-ninth of the time, he found in these instances that this frequency is approximately $30\%$ for the digit 1 and less than $5\%$ for the digit 9 \cite{SN}. This phenomenon, known as \emph{Benford's Law}, appears in a wide class of data including river lengths and population demographics. For a more detailed discussion on the history and previous work on the subject see \cite{B}, \cite{Hill1}, \cite{Hill2}.
\\\indent
Although this ``law" is well-known, it has only been proven to hold for a relatively small class of arithmetic functions. For example, Miller and Kontorovich prove Benford's law  for distributions of values of $L$-functions and the $3x+1$ problem in \cite{MillKon}. The purpose of this paper is to prove that the statistically observed frequencies dictated by Benford's law hold for an infinite class of sequences arising as the coefficients of modular forms, including the \emph{partition function} $p(n)$.
\\\indent We define a \emph{partition} of a non-negative integer $n\in \mathbb{N}$ to be any non-increasing sequence of positive integers which sum to $n$. The \emph{partition number} $p(n)$ is the number of partitions of $n$. For a sequence of positive integers $a(n)$, let
\begin{equation}
B(d,x,k;a(n)) = \frac{\# \text{ \{ $n$ } \leq \text{ $x$ : first digits of } a(n) \text{ in base $k$ are the string } d \} }{x}.
\end{equation}
\indent
We say $a(n)$ is \emph{Benford} if $\displaystyle\lim_{x \to \infty} B(d,x,k;a(n))\equiv(\log_{k}(d+1)-\log_{k}(d))$ (mod 1) for all integers $k\geq 2$.  We denote the space of Benford functions as $\mathcal{B}$. Note that for a function to belong to $\mathcal{B}$, Benford's Law must hold for any initial string of digits in any base. We include this level of generality to study frequencies such as $B(101,x,2;a(n))$ which counts the proportion of $n\leq x$ for which $a(n)$ begins with the string $d$=``101" in base 2. Since $101_2 =  5_{10}$ and $\log_{2}(6)-\log_{2}(5)\approx 0.263$ (mod 1), the predicted frequency for members of a Benford sequence to begin with $101_2$ is about $26.3\%$.

The following data for initial digits illustrates the plausibility of Benford's Law for $p(n)$ for $k=10$.
\begin{table}[ht]
\caption{$B(d,x,10;p(n))$} 
\scriptsize
\begin{tabular}{c c c c c c c c c c}  
\hline\hline                        
$x$ & $d$ = 1 & 2 & 3 &4&5&6&7&8&9\\ [0.5ex] 
\hline                    
$10^{2}$& 0.33&0.16& 0.14&0.09&0.07&
  0.06&0.07&0.05&0.03\\
$10^{3}$&0.305&0.177&0.127&0.094&0.076&
0.068&0.057&0.052&0.044\\
$10^{4}$& 0.302&0.177& 0.126& 0.096&
  0.078&0.067&0.057&0.051& 0.046\\
$\downarrow$ & $\downarrow$ & $\downarrow$ & $\downarrow$ & $\downarrow$ & 
$\downarrow$ & $\downarrow$ & $\downarrow$ & $\downarrow$ & $\downarrow$\\
$\infty$?&0.301&0.176&0.125&0.097& 0.079&0.067& 
0.057&0.051&0.046\\

\hline     
\end{tabular}
\label{table:nonlin}  
\end{table}
\\
We also have the following data for the first three digits of $p(n)$ in base 2.
\begin{table}[ht]
\caption{$B(d,x,2;p(n))$} 
\scriptsize
\begin{tabular}{c c c c c}  
\hline\hline                        
$x$ & $d$ = 100 & $d$=101 & $d$=110 &$d$=111\\ [0.5ex] 
\hline                    
200&0.285&0.270&0.205&0.225\\
400&0.308&0.273&0.209&0.205\\
600&0.313&0.267&0.217&0.198\\
800&0.314&0.263&0.219&0.201\\
1000&0.315&0.262&0.220&0.200\\
5000&0.321&0.264&0.222&0.194\\
$\downarrow$ & $\downarrow$ & $\downarrow$ & $\downarrow$ & $\downarrow$\\
$\infty$?&0.322&0.263&0.222&0.192\\

\hline     
\end{tabular}
\label{table:nonlin}  
\end{table}

\indent
To start, we begin with the following definition and theorem.
\begin{dfn}
We say that an integer-valued function $a(n)$ is \emph{good} whenever
\[
a(n)\sim b(n)e^{c(n)}
\]
(where $f(x) \sim g(x)$ means that $\displaystyle \lim_{x \to \infty} f(x)/g(x) =1$)
and the following conditions are satisfied:
\begin{enumerate}
\item{There exists some integer $h\geq 1$ such that $c(n)$ is $h$-differentiable and $c^{(h)}(n)$ tends to zero monotonically for sufficiently large $n$.}
\item{$\displaystyle\lim_{n \to \infty}n|c^{(h)}(n)| = \infty$}
\item{$\displaystyle\lim_{n \to \infty}\frac{D^{(h)}\log b(n)}{c^{(h)}(n)} = 0$, where $D^{(h)}$ denotes the $h^{th}$ derivative.}
\end{enumerate}
\end{dfn}
Our first result is the following theorem. 
\begin{thm}\label{goodthm}
If $a(n)$ is good, then $a(n)\in \mathcal{B}$.
\end{thm}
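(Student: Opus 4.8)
The plan is to reduce Benford's law, for each fixed base $k \ge 2$, to the equidistribution modulo $1$ of the sequence $\big(\log_k a(n)\big)_{n}$, and then to verify that equidistribution by applying a classical Fej\'er / van der Corput--type criterion to the smooth function $F(x) := \big(\log b(x) + c(x)\big)/\log k$ coming from the asymptotic $a(n)\sim b(n)e^{c(n)}$.

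First I would record the standard dictionary between leading digits and fractional parts. For a length-$m$ string $d$ in base $k$ (identified with the integer it represents), the first base-$k$ digits of a positive integer $a$ form the string $d$ precisely when $d\,k^{j}\le a<(d+1)k^{j}$ for some integer $j\ge 0$, i.e.\ when the fractional part $\{\log_k a\}$ lies in an interval of length $\log_k(d+1)-\log_k d$ (the endpoints being $\{\log_k d\}$ and $\{\log_k(d+1)\}$; here $d$ and $d+1$ have the same number of base-$k$ digits except in the single harmless case $d=k^{m}-1$, which is checked directly). As these intervals, over all admissible $d$ of all lengths, have endpoints dense in $[0,1)$, matching the limiting frequency $B(d,x,k;a(n))$ to the interval length for every $d$ is equivalent to: $\big(\log_k a(n)\big)_n$ is uniformly distributed mod $1$. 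Since $a(n)\sim b(n)e^{c(n)}$ forces $a(n)>0$ for large $n$ and $\log a(n)=\log b(n)+c(n)+o(1)$, and since perturbing a sequence by one tending to $0$ does not change whether it is uniformly distributed mod $1$, it suffices to prove that $\big(F(n)\big)_n$ is uniformly distributed mod $1$ for each $k\ge 2$.

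For the sequence $\big(F(n)\big)_n$ I would invoke the classical result (see, e.g., Kuipers--Niederreiter, \emph{Uniform Distribution of Sequences}, Ch.~1): if $f$ is $h$ times differentiable on $[x_0,\infty)$ with $f^{(h)}(x)\to 0$ monotonically and $x\,|f^{(h)}(x)|\to\infty$, then $\big(f(n)\big)_n$ is uniformly distributed mod $1$. (For $h=1$ this is Fej\'er's theorem; the general case follows from Weyl's criterion combined with van der Corput's $h$-th derivative estimate for exponential sums.) Now $F^{(h)}(x)=\big(D^{(h)}\log b(x)+c^{(h)}(x)\big)/\log k$, and condition (3) of goodness says exactly that $D^{(h)}\log b(x)=o\big(c^{(h)}(x)\big)$, so $F^{(h)}(x)=(1+o(1))\,c^{(h)}(x)/\log k$. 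Condition (1) then yields $F^{(h)}(x)\to 0$, and condition (2) yields $x\,|F^{(h)}(x)|\to\infty$, so the criterion applies: $\big(F(n)\big)_n$, hence $\big(\log_k a(n)\big)_n$, is uniformly distributed mod $1$. As $k\ge 2$ was arbitrary, $a(n)\in\mathcal{B}$.

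The step I expect to demand the most care is the monotonicity hypothesis on $F^{(h)}$: goodness guarantees only that $c^{(h)}$ is eventually monotone and that $D^{(h)}\log b$ is of strictly smaller order, which does not by itself force the sum $F^{(h)}$ to be eventually monotone. I would handle this either by sharpening the bookkeeping---in the intended applications, such as the partition function where $b(n)$ is a constant times a power of $n$, $D^{(h)}\log b$ is itself eventually monotone and is dominated, in both size and derivative, by $c^{(h)}$, so $F^{(h)}$ is eventually monotone---or by appealing to a version of the van der Corput estimate that needs only $|F^{(h)}|$ to lie between two comparable monotone functions, which is immediate from $F^{(h)}\sim c^{(h)}/\log k$ with $c^{(h)}$ monotone. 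The remaining points (that the $o(1)$ in the asymptotic is harmless after taking logarithms, and the $d=k^{m}-1$ boundary case) are routine.
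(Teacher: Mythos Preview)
Your proposal is correct and follows essentially the same route as the paper: reduce Benford to equidistribution of $\log_k a(n)$ mod $1$ via Diaconis, pass to $(\log b(n)+c(n))/\log k$ using the asymptotic, and apply the Kuipers--Niederreiter criterion (the paper's Theorem~\ref{vdk}). The monotonicity concern you flag for $F^{(h)}$ is exactly the step the paper glosses over with the phrase ``the limits are unaffected upon adding $\log b(n)$,'' so your write-up is at least as careful as the original.
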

As a special case of this result we obtain the following corollary. 
\begin{corollary}\label{p(n)}
The partition function $p(n)\in \mathcal{B}.$
\end{corollary}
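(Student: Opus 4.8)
The plan is to deduce Corollary~\ref{p(n)} directly from Theorem~\ref{goodthm}: since that theorem guarantees $a(n)\in\mathcal{B}$ whenever $a(n)$ is good, it suffices to exhibit a decomposition $p(n)\sim b(n)e^{c(n)}$ for which conditions (1)--(3) in the definition of \emph{good} hold. The natural choice comes from the classical Hardy--Ramanujan asymptotic
\[
p(n)\sim\frac{1}{4n\sqrt{3}}\,e^{\pi\sqrt{2n/3}},
\]
namely $b(n)=\frac{1}{4n\sqrt{3}}$ and $c(n)=\pi\sqrt{2/3}\,n^{1/2}$. (The requirement that $p(n)$ be a sequence of positive integers is of course immediate, since $p(n)\geq 1$ for all $n\geq 0$.)

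Next I would verify the three conditions, taking $h=1$. Since $c$ is a constant multiple of $n^{1/2}$, one computes $c'(n)=\frac{\pi}{\sqrt{6}}\,n^{-1/2}$, which is positive and decreases monotonically to $0$ for $n\geq 1$; this is condition (1). For condition (2), $n\,|c'(n)|=\frac{\pi}{\sqrt{6}}\,n^{1/2}\to\infty$. For condition (3), write $\log b(n)=\log\frac{1}{4\sqrt{3}}-\log n$, so that $D^{(1)}\log b(n)=-1/n$, whence
\[
\frac{D^{(1)}\log b(n)}{c'(n)}=-\frac{\sqrt{6}}{\pi}\,n^{-1/2}\longrightarrow 0\qquad(n\to\infty).
\]
Therefore $p(n)$ is good, and an application of Theorem~\ref{goodthm} completes the proof.

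I do not anticipate a genuine obstacle. The only point that requires a little care is that the factorization $p(n)\sim b(n)e^{c(n)}$ is not unique, and one must place the entire $\sqrt{n}$ growth into the exponent $c(n)$ while keeping the algebraic prefactor $\frac{1}{4n\sqrt{3}}$ in $b(n)$: with this allocation $c^{(h)}$ decays like $n^{1/2-h}$ whereas $(\log b)^{(h)}$ decays like $n^{-h}$, so (3) holds with a factor of $n^{-1/2}$ to spare, and (1)--(2) are similarly robust. Other allocations (for instance absorbing $\log(4n\sqrt{3})$ into $c$) would spoil either the monotonicity in (1) or the limit in (3), so the definition of \emph{good} is really pointing at this particular normalization. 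Everything else is a one-line application of the power rule. By contrast, the analogous verification for the general families of modular forms treated in the body of the paper is where the substantive analysis lies, since there one must first establish Bringmann--Ono-type exact formulas in order to extract the asymptotic shape $b(n)e^{c(n)}$.
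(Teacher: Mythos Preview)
Your proof is correct and follows exactly the paper's approach: invoke the Hardy--Ramanujan asymptotic $p(n)\sim \frac{1}{4n\sqrt{3}}\,e^{\pi\sqrt{2n/3}}$, check that this makes $p(n)$ good (the paper simply asserts this, whereas you spell out the verification with $h=1$), and apply Theorem~\ref{goodthm}. Your closing remark that other allocations would spoil (1) or (3) is not quite right---for instance, absorbing the prefactor into $c$ by taking $b\equiv 1$ and $c(n)=\pi\sqrt{2n/3}-\log(4n\sqrt{3})$ also satisfies all three conditions---but this is tangential commentary and does not affect the validity of your argument.
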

\begin{proof}
Using the celebrated Hardy-Ramanujan asymptotic
\[ p(n)\sim \frac{1}{4n\sqrt{3}}\cdot e^{\pi\sqrt{2n/3}}
,\]
it immediately follows that $p(n)$ is good and hence Benford.
\end{proof}
Next, we explicitly demonstrate that a large class of arithmetic functions arising from the coefficients of modular forms is Benford, as in the following theorem.
\begin{thm}\label{mf}
Suppose that $M(z)$ is a weakly holomorphic modular form (see $\S 3.1$ for the definition) of weight $\frac{1}{2} \geq k \in \frac{1}{2}\mathbb{Z}$ with integral Fourier coefficients and at least one pole. Then the nonzero coefficients of $M(z)$ are Benford.
\end{thm}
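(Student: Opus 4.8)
The plan is to deduce Theorem~\ref{mf} from Theorem~\ref{goodthm} by showing that the Fourier coefficients of $M(z)$, after discarding the finitely many vanishing ones and reindexing, form a \emph{good} sequence. The only input needed is an asymptotic $a(n)\sim b(n)e^{c(n)}$ for the $n$-th coefficient $a(n)$, and this is exactly what the exact (Rademacher/circle-method) formula for the coefficients of a weakly holomorphic form of weight $k\le\frac12$ provides; producing and analyzing that formula---extending the Bringmann--Ono work on exact formulas for harmonic Maass forms---is the content of \S2. Concretely, the principal parts of $M$ at its cusps are finite Laurent polynomials, and since $M$ has at least one pole we may set $N_0>0$ to be the largest normalized pole order that occurs (for $M=1/\eta$ one has $N_0=\tfrac1{24}$). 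The exact formula writes $a(n)$ as an absolutely convergent double series over the finitely many poles $j$ of $M$ and over integers $c\ge1$, the $(j,c)$ term being a Kloosterman-type exponential sum times $c^{-1}I_{1-k}\bigl(\tfrac{4\pi}{c}\sqrt{N_j\,n}\bigr)$ up to an elementary power of $n$, where $N_j>0$ is the normalized order of the $j$-th pole; thus $N_0=\max_j N_j$.

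As $n\to\infty$ the dominant contribution is the $c=1$ term(s) attached to the worst pole(s): there the Bessel argument equals $4\pi\sqrt{N_0 n}$, while every other term has a strictly smaller Bessel argument and is therefore exponentially smaller. Feeding in $I_\nu(x)=(2\pi x)^{-1/2}e^{x}\bigl(1+O(1/x)\bigr)$, valid here since $1-k\ge\frac12$, the exact formula collapses to
\[
a(n)=\kappa\,n^{\mu}\,e^{4\pi\sqrt{N_0 n}}\,\bigl(1+o(1)\bigr),\qquad \mu=\tfrac{2k-3}{4},
\]
for an explicit real constant $\kappa\neq0$ (for $p(n)$: $k=-\tfrac12$, $N_0=\tfrac1{24}$, $\mu=-1$, $\kappa=\tfrac1{4\sqrt3}$, recovering $p(n)\sim\tfrac1{4n\sqrt3}e^{\pi\sqrt{2n/3}}$). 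So we take $b(n)=\kappa\,n^{\mu}$ and $c(n)=4\pi\sqrt{N_0}\,\sqrt n$. Since $|b(n)|\,e^{c(n)}\to\infty$, we get $a(n)\neq0$ for all but finitely many $n$; deleting those indices and reindexing alters neither the limiting digit frequencies $B(d,x,k;\cdot)$ nor the hypotheses checked below, so it suffices to prove the reindexed sequence is good.

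Then I would verify conditions (1)--(3) with $h=1$. The function $c(n)=4\pi\sqrt{N_0}\,\sqrt n$ is smooth with $c'(n)=2\pi\sqrt{N_0}\,n^{-1/2}$, which decreases monotonically to $0$, giving (1); and $n\,|c'(n)|=2\pi\sqrt{N_0}\,\sqrt n\to\infty$, giving (2). For (3), $\log b(n)=\log\kappa+\mu\log n$, so $\bigl(\log b(n)\bigr)'=\mu/n$ and
\[
\frac{\bigl(\log b(n)\bigr)'}{c'(n)}=\frac{\mu}{2\pi\sqrt{N_0}}\,n^{-1/2}\longrightarrow0.
\]
(The factor $1+o(1)$ above is, by the Bessel expansion, a smooth $1+O(n^{-1/2})$ with derivative $O(n^{-3/2})$, so folding it into $b(n)$ leaves (3) intact.) Hence $a(n)$ is good, and Theorem~\ref{goodthm} gives $a(n)\in\mathcal B$; since only finitely many coefficients vanish, this is exactly the assertion that the nonzero coefficients of $M(z)$ are Benford.

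The real work is in \S2, in two places. First, one must rigorously justify the exact formula in this half-integral-weight, multi-cusp setting, and establish uniform bounds on the Kloosterman sums and Bessel tails guaranteeing that the non-dominant terms are genuinely $o\bigl(b(n)e^{c(n)}\bigr)$; the hypothesis $k\le\frac12$, i.e.\ Bessel order $1-k\ge\frac12$, is what keeps these estimates in the standard range. Second---and this is the delicate point---one must check that the leading term really has the form $\kappa\,n^{\mu}e^{4\pi\sqrt{N_0 n}}$ with $\kappa$ a fixed nonzero real constant, rather than a quantity that oscillates with $n$ and could vanish infinitely often: when a single cusp attains $N_0$ this is automatic because the $c=1$ exponential sum is essentially trivial, but when several cusps tie one must verify that their $c=1$ contributions neither cancel nor retain any $n$-dependence. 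Once this asymptotic is in hand, the reduction to Theorem~\ref{goodthm} above is immediate.
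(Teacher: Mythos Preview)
Your overall strategy---feed the Bringmann--Ono exact formula through the Bessel asymptotic and then invoke Theorem~\ref{goodthm}---is exactly the paper's. The gap is in the shape you claim for the leading term. You assert that $a(n)\sim\kappa\,n^{\mu}e^{4\pi\sqrt{N_0 n}}$ with a \emph{single} nonzero constant $\kappa$, and you flag as the ``delicate point'' the need to verify that the dominant $c=1$ contributions ``neither cancel nor retain any $n$-dependence.'' But for forms of level $N>1$ they typically \emph{do} retain $n$-dependence: the Kloosterman-type sums attached to the dominant cusp(s) involve additive characters in $n$, so the leading coefficient varies periodically in $n$ and can vanish on entire residue classes. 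Your proposed resolution (check that $\kappa$ is constant) is therefore not generally available.

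The paper's fix is not to rule this out but to absorb it: one shows there is an integer $t\ge1$ such that along each progression $n\equiv r\pmod t$ the nonzero coefficients satisfy
\[
m(tn+r)\;\sim\;K(M,r,t)\,(tn+r)^{(k-1)/2}\,I_{1-k}\!\bigl(\alpha(M,r,t)\sqrt{tn+r}\bigr),
\]
with constants $K$ and $\alpha$ allowed to depend on $r$. Each such subsequence is good in the sense of the definition, hence Benford by Theorem~\ref{goodthm}, and the full sequence of nonzero coefficients is then Benford as a finite union of equal-density Benford subsequences. Once you split by residue class, the possibility that an entire progression has vanishing coefficients (or a different growth rate) is handled automatically. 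Your verification of conditions (1)--(3) with $h=1$ is fine and applies verbatim to each progression; it is only the reduction to a single asymptotic that needs to be replaced by this finite splitting. The separate treatment of $k=\tfrac12$ in the paper (an extra finite sum from the Serre--Stark theta contributions, which does not affect the asymptotic) should also be noted.
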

Generalizing Corollary \ref{p(n)}, we obtain the following corollary. 
\begin{corollary}\label{p_s(n)}
For any positive integer we say that a partition is \emph{s-regular} if it has no part divisible by $s$. Denote by $b_s(n)$ the number of $s$-regular partitions. Then Theorem \ref{mf} implies that $b_s(n)$ is Benford.
\end{corollary}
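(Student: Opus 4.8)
The plan is to recognize the generating function for $b_s(n)$ as a Dedekind eta quotient and then invoke Theorem \ref{mf} directly. Separating out the parts divisible by $s$ in the usual infinite product for partitions gives
\[
\sum_{n \geq 0} b_s(n)\, q^n \;=\; \prod_{\substack{n \geq 1 \\ s \nmid n}} \frac{1}{1-q^n} \;=\; \prod_{n \geq 1} \frac{1-q^{sn}}{1-q^n}.
\]
Since $\eta(z) = q^{1/24}\prod_{n\geq1}(1-q^n)$ with $q = e^{2\pi i z}$, this identity says that, up to the elementary factor $q^{(s-1)/24}$,
\[
\frac{\eta(sz)}{\eta(z)} \;=\; q^{(s-1)/24}\sum_{n\geq0} b_s(n)\,q^n .
\]
To clear the fractional exponent I would instead work with $\eta(24sz)/\eta(24z) = q^{\,s-1}\sum_{n\geq0} b_s(n)\, q^{24n}$, whose Fourier expansion is supported on exponents $\equiv s-1 \pmod{24}$ and has the integers $b_s(n)$ as its nonzero coefficients.

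Next I would verify that $\eta(24sz)/\eta(24z)$ (or, if the definition in $\S 3.1$ permits multiplier systems, $\eta(sz)/\eta(z)$ itself on $\Gamma_0(s)$) satisfies the hypotheses of Theorem \ref{mf}. Its weight is $\tfrac12 - \tfrac12 = 0 \leq \tfrac12$; by the standard criteria for eta quotients to transform as modular forms (Gordon--Hughes--Newman, Ligozat; see $\S 3.1$) it is a weakly holomorphic modular form on $\Gamma_0(24s)$, and its Fourier coefficients are integers. Because $\eta$ has no zeros on the upper half-plane, the quotient is holomorphic there; if it were also holomorphic at every cusp it would be a holomorphic modular form of weight $0$ and hence constant, which fails for $s \geq 2$. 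Therefore it has at least one pole, necessarily at a cusp, and Theorem \ref{mf} applies: the sequence of nonzero Fourier coefficients of $\eta(24sz)/\eta(24z)$ is Benford. For $s \geq 2$ one has $b_s(n) \geq 1$ for every $n$ (the partition $n = 1 + 1 + \cdots + 1$ is $s$-regular, and $b_s(0)=1$), so this sequence of nonzero coefficients is exactly $(b_s(n))_{n\geq0}$; hence $b_s(n) \in \mathcal{B}$.

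The only real work is confirming that the eta quotient genuinely lies in the class handled by Theorem \ref{mf}: choosing the level so that both the transformation law and the integrality of the Fourier coefficients hold, and checking that the order at some cusp is strictly negative. The latter can be obtained abstractly via the valence formula together with non-constancy, as above, or made explicit using the formula for $\ord_{a/c}\,\eta(\delta z)$ at the cusps of $\Gamma_0(N)$. I expect this cusp/level bookkeeping to be the main obstacle; it is routine, and everything else is the elementary partition identity above combined with a direct appeal to Theorem \ref{mf}.
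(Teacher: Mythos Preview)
Your approach is essentially identical to the paper's: write the generating function as the eta quotient $\eta(24sz)/\eta(24z)$, observe it is a nonconstant weight-$0$ weakly holomorphic modular form (hence has a pole at some cusp since $\eta$ is nonvanishing on $\mathbb{H}$), and apply Theorem~\ref{mf}. One small correction: the level $24s$ does not in general satisfy the second congruence in the eta-quotient criterion of \S3.1 (one needs $\sum_{\delta}\tfrac{N}{\delta}r_\delta\equiv 0\pmod{24}$, and with $N=24s$ this gives $1-s$); taking $N=576s$ fixes this, which is the level the paper uses. Your added remark that $b_s(n)\geq 1$ for $s\geq 2$, so that the nonzero coefficients are exactly the $b_s(n)$, is a point the paper leaves implicit.
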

Applying Theorem \ref{goodthm} to generalized Dedekind Eta-products, we have the following corollary.
\begin{corollary}
For $\delta \geq 2$ and $0 < g < \lfloor \frac{\delta +1}{2}\rfloor$ define $r_{g,\delta}(n)$ to be the number of partitions of $n$ into parts congruent to $\pm g$ (mod $\delta$).  This includes the famous Rogers-Ramanujan functions
\[
\sum_{n=0}^{\infty}\frac{q^{n^{2}+an}}{(q;q)_n}=\prod_{n=0}^{\infty}\frac{1}{(1-q^{5n+a+1})(1-q^{5n+4-a})},
\]
for $a=0,1$, where for $n>0$, $(a;q)_n:=\displaystyle \prod_{i=0}^{n-1}(1-aq^i)$ is the $q$-Pochhammer symbol.  Note that these equalities are given by the celebrated Rogers-Ramanujan identities.
Then $r_{g,\delta}(n)$ is Benford.
\end{corollary}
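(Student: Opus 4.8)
The plan is to mimic the proof of Corollary \ref{p(n)}: I would produce an asymptotic of the form $r_{g,\delta}(n)\sim b(n)e^{c(n)}$, check that it satisfies the three conditions in the definition of a \emph{good} function, and then invoke Theorem \ref{goodthm}. The starting point is the product generating function
\[
\sum_{n\ge 0} r_{g,\delta}(n)\,q^{n}=\prod_{\substack{m\ge 1\\ m\equiv \pm g\!\!\pmod{\delta}}}\frac{1}{1-q^{m}},
\]
where the hypothesis $0<g<\lfloor(\delta+1)/2\rfloor$ forces $2g<\delta$ and hence $g\not\equiv-g\pmod\delta$, so exactly the two residue classes $g$ and $\delta-g$ occur. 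Up to an elementary power of $q$ this product equals $1/\eta_{\delta,g}(z)$, the reciprocal of a generalized Dedekind eta function, so it is a weakly holomorphic modular form of weight $-\tfrac12$ with a pole at a cusp; this is the structural reason an asymptotic of Hardy-Ramanujan type exists.

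To make this precise I would apply Meinardus' theorem (equivalently, run the Hardy-Ramanujan-Rademacher circle method on the eta-quotient above). The relevant Dirichlet series is
\[
D(s)=\sum_{\substack{m\ge 1\\ m\equiv\pm g\!\!\pmod\delta}}\frac{1}{m^{s}}=\delta^{-s}\!\left(\zeta\!\left(s,\tfrac{g}{\delta}\right)+\zeta\!\left(s,1-\tfrac{g}{\delta}\right)\right),
\]
which continues meromorphically to $\mathbb{C}$ with a single simple pole at $s=1$ of residue $A=2/\delta$, grows polynomially on vertical strips, and satisfies $D(0)=\zeta(0,\tfrac{g}{\delta})+\zeta(0,1-\tfrac{g}{\delta})=0$ because $\zeta(0,a)=\tfrac12-a$. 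Meinardus' theorem then yields
\[
r_{g,\delta}(n)\sim C\, n^{-3/4}\exp\!\left(\frac{2\pi}{\sqrt{3\delta}}\,\sqrt{n}\right)
\]
for an explicit constant $C=C(g,\delta)>0$; here the power of $n$ is $(D(0)-1-\tfrac12)/2=-\tfrac34$ and the constant in the exponential is $2\sqrt{A\,\zeta(2)}=2\pi/\sqrt{3\delta}$. Thus I take $b(n)=Cn^{-3/4}$ and $c(n)=\tfrac{2\pi}{\sqrt{3\delta}}\sqrt n$.

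It remains to verify goodness with $h=1$, a short computation: $c'(n)=\tfrac{\pi}{\sqrt{3\delta}}\,n^{-1/2}\to 0$ monotonically, giving condition (1); $n|c'(n)|=\tfrac{\pi}{\sqrt{3\delta}}\sqrt n\to\infty$, giving condition (2); and $D\log b(n)=-\tfrac{3}{4n}$, so $D\log b(n)/c'(n)=-\tfrac{3\sqrt{3\delta}}{4\pi}\,n^{-1/2}\to 0$, giving condition (3). Hence $r_{g,\delta}(n)$ is good, and Theorem \ref{goodthm} gives $r_{g,\delta}(n)\in\mathcal{B}$; the Rogers-Ramanujan functions are the instances $\delta=5$, $g\in\{1,2\}$, so they are covered as well. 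The one genuine obstacle is the asymptotic formula: one must confirm that the subexponential factor is, to leading order, a pure power of $n$ with no oscillatory part, so that condition (3) is not spoiled. This is exactly what the hypotheses of Meinardus' theorem (meromorphic continuation and polynomial growth of $D(s)$, plus the standard estimate on the log-product away from $q=1$) secure for the two Hurwitz zeta summands; alternatively one may cite the known circle-method asymptotics for eta-quotients of negative weight. Everything past that point is the routine differentiation carried out above.
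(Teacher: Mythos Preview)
Your argument is correct but follows a genuinely different route from the paper. The paper observes that the generating function for $r_{g,\delta}(n)$ is, up to a power of $q$, the reciprocal of the generalized Dedekind eta product $\eta_{g,\delta}(z)$, which by Robins is a nonconstant weakly holomorphic modular form of weight \emph{zero} (not $-\tfrac12$ as you wrote in your aside; for $g\neq 0,\tfrac{\delta}{2}$ these generalized etas have weight $0$, unlike $\eta(z)$ itself). It then simply invokes Theorem~\ref{mf}, so the asymptotic step is hidden inside the Bringmann--Ono exact formulas for harmonic Maass forms.

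You instead bypass Theorem~\ref{mf} entirely: you apply Meinardus' theorem to the infinite product, compute $D(s)$ via Hurwitz zeta functions, read off the residue $A=2/\delta$ and $D(0)=0$, obtain the explicit asymptotic $r_{g,\delta}(n)\sim Cn^{-3/4}\exp\!\big(\tfrac{2\pi}{\sqrt{3\delta}}\sqrt{n}\big)$, and then verify goodness directly with $h=1$ and appeal to Theorem~\ref{goodthm}. This is more hands-on but also more self-contained: it avoids the heavy harmonic Maass form machinery and produces the explicit exponent and power of $n$, at the cost of having to check the Meinardus hypotheses (which you do, correctly). The paper's approach, by contrast, is a one-line application of its general modular-form theorem and thus illustrates the utility of that theorem, but gives no explicit constants. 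Both are valid; apart from the harmless weight slip, your proof goes through.
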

\emph{Remark.}

It is well-known that every nonconstant weakly holomorphic modular form with non-positive weight has a pole. Though this is not true for weights $k\geq \frac{1}{2}$, Theorem \ref{mf} applies to all weakly holomorphic modular forms of weight $\frac{1}{2}$ with a pole and essentially all weakly holomorphic modular forms with higher weight which have a pole. See the discussion and example in $\S 3.2$ for more details. For a more detailed discussion on weakly holomorphic modular forms see $\S 3.1$ and \cite{KO}.

\section{Theorems on Uniform Distribution}
\subsection{Preliminaries on Uniform Distribution and Properties of Benford Spaces}

We first note that proving a sequence is Benford reduces to a problem of uniform distribution. Using this formulation and some classical theorems on uniform distribuiton, we derive a set of sufficient conditions for being Benford.
\begin{dfn}
For a real sequence $a(n)$, $N\in \mathbb{N}$, and $E\subseteq \mathbb{R}$, let $A(E,N,a(n)):=\# \{n\leq N \mid a(n)\in E\}$. Then $a(n)$ is said to be \emph{uniformly distributed mod 1}, if,  for all intervals $[a,b) \subseteq [0,1)$,
\[
\displaystyle\lim_{n\to \infty}\frac{A([a,b),n,\{ a(n) \})}{n} = b-a,
\]
where $<x>$ denotes the fractional part of $x$.
\end{dfn}

We begin by recalling a result of Diaconis that $ a(n) \in \mathcal{B}$ if and only if $\log_{k}(a(n))$ is uniformly distributed mod 1 for all $k$ \cite{PD}. To prove our results we need the following preliminary theorem.
\begin{thm}[Weyl's Criterion]
The sequence $a(n)$ is uniformly distributed mod 1 if and only if 
\[
\displaystyle\lim_{N\to \infty}\frac{1}{N}\sum_{n=1}^{N}e^{2\pi k \cdot a(n)}=0
\]
for any $0\neq k\in \mathbb{Z}.$
\end{thm}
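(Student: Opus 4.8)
The plan is to prove both implications through the intermediate fact that uniform distribution mod 1 is equivalent to the averaging identity
\[
\lim_{N\to\infty}\frac{1}{N}\sum_{n=1}^{N}f(\{a(n)\}) = \int_0^1 f(x)\,dx
\]
holding for a sufficiently rich class of test functions $f$, where $\{x\}$ denotes the fractional part. The complex exponentials $f(x)=e^{2\pi i k x}$ are the natural functions to isolate, since they integrate to $0$ for $k\neq 0$ and to $1$ for $k=0$, and since (uniformly) they generate all continuous $1$-periodic functions.

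For the forward direction I would first observe that the definition of uniform distribution is precisely the averaging identity above for $f=\chi_{[a,b)}$, the indicator of a subinterval of $[0,1)$. By linearity this extends to all step functions, and then by sandwiching an arbitrary Riemann-integrable $g$ between step functions from above and below whose integrals differ by an arbitrarily small amount, one obtains the identity for every Riemann-integrable $f$. Applying this to the continuous functions $\cos(2\pi k x)$ and $\sin(2\pi k x)$ and recombining gives
\[
\lim_{N\to\infty}\frac{1}{N}\sum_{n=1}^{N}e^{2\pi i k a(n)} = \int_0^1 e^{2\pi i k x}\,dx = 0
\]
for every nonzero integer $k$, as required.

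For the converse I would assume the exponential sums vanish for all $k\neq 0$ and run the same chain of approximations in reverse. The hypothesis, together with the trivial $k=0$ term, shows the averaging identity holds for every trigonometric polynomial $P(x)=\sum_{|k|\le K} c_k e^{2\pi i k x}$. By the trigonometric form of the Weierstrass approximation theorem, any continuous $1$-periodic function is a uniform limit of such polynomials, and a standard three-$\varepsilon$ argument then transfers the averaging identity to all continuous periodic $f$. Finally, given $[a,b)$, I would squeeze $\chi_{[a,b)}$ between continuous periodic functions $f_-\le \chi_{[a,b)}\le f_+$ whose integrals both lie within $\varepsilon$ of $b-a$; comparing the corresponding averages pinches $\lim_{N} A([a,b),N,\{a(n)\})/N$ between $b-a-\varepsilon$ and $b-a+\varepsilon$, and letting $\varepsilon\to 0$ yields uniform distribution mod 1.

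The main obstacle is the double approximation underlying the converse: passing from trigonometric polynomials to continuous functions rests on the trigonometric Weierstrass (or Fej\'er) theorem, while passing from continuous functions to indicator functions requires one-sided continuous approximants whose integrals pinch $b-a$. Neither step is deep, but both must be carried out uniformly in $N$ so that the order of the limit in $N$ and the approximation may be safely interchanged; this interchange is where essentially all of the rigor resides.
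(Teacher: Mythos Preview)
Your argument is the standard textbook proof of Weyl's Criterion and is correct as outlined; the two approximation steps you flag (trigonometric Weierstrass/Fej\'er for the passage to continuous functions, and one-sided continuous envelopes for the passage to indicators) are exactly where the work lies, and the uniformity in $N$ goes through because in each step the approximation is in the supremum norm, independent of $N$.

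As for comparison with the paper: there is nothing to compare. The paper does not prove Weyl's Criterion at all --- it is merely \emph{stated} as a classical preliminary (the reference \cite{UD} is to Kuipers--Niederreiter, where the full proof can be found), and the paper then moves on to quote the van der Corput-type result it actually needs. So you have supplied a proof where the paper simply cites one. Incidentally, the paper's displayed statement has a typo (the exponent should read $2\pi i k\, a(n)$, not $2\pi k\, a(n)$); your write-up silently corrects this.
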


As a corollary to Van der Corput's Difference Theorem we recall the following theorem:
\begin{thm}[Theorem 3.5 of \cite{UD}]
\label{vdk}
Let k $\in \mathbb{N}$ and $f(x)$ be a function defined for $x\geq 1$ which is k-times differentiable for all $x\geq x_{0}$ for some $x_{0}\in \mathbb{R}_{+}$. Suppose that $f^{(k)}$ is eventually monotonic,
\[ 
\displaystyle\lim_{x \to \infty}\ f^{(k)}(x) =0,
\]
and
\[
\displaystyle\lim_{x\to \infty} x|{f^{(k)}(x)}|=\infty.
\]
Then $\{f(n) : n\in \mathbb{N}\}$ is uniformly distributed mod 1.
\end{thm}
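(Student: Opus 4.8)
The plan is to prove the statement by induction on $k$, using Weyl's Criterion to handle the base case and Van der Corput's Difference Theorem to carry out the inductive step. Throughout write $e(t):=e^{2\pi i t}$. The base case $k=1$ is the analytic heart of the argument and reduces, via Weyl's Criterion, to a single first-derivative estimate for exponential sums; the inductive step is then formal, replacing a discrete difference by one ``extra derivative.''

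\emph{Base case $k=1$.} By Weyl's Criterion, to prove $\{f(n)\}$ is uniformly distributed mod $1$ it suffices to show $\frac1N\sum_{n=1}^N e(m f(n))\to 0$ for each fixed nonzero integer $m$. Since $mf$ inherits all three hypotheses from $f$ (its derivative $m f'$ is eventually monotonic, tends to $0$, and satisfies $x\,|m f'(x)| = |m|\,x\,|f'(x)|\to\infty$), I may absorb $m$ and reduce to showing $\sum_{n=1}^N e(f(n))=o(N)$. Because $f'$ is eventually monotonic with $f'(x)\to 0$, it has a constant sign for $x\ge x_1$ and $|f'|$ decreases monotonically to $0$; in particular $|f'(x)|<\tfrac12$ there, so the distance from $f'(x)$ to the nearest integer equals $|f'(x)|$. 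I would then invoke the Kusmin--Landau inequality (the classical first-derivative estimate): if $f'$ is monotonic on $[a,b]$ and stays at distance at least $\lambda>0$ from the integers, then $\bigl|\sum_{a<n\le b} e(f(n))\bigr|=O(1/\lambda)$. Applying this on $[x_1,N]$ with $\lambda=|f'(N)|$ (the minimal distance, since $|f'|$ decreases) gives
\[
\left|\sum_{n=1}^{N} e(f(n))\right| \le x_1 + O\!\left(\frac{1}{|f'(N)|}\right) = o(N),
\]
where the final equality is exactly where $N\,|f'(N)|\to\infty$ enters, forcing $1/|f'(N)|=o(N)$.

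\emph{Inductive step.} Assume the theorem for $k-1$ and let $f$ satisfy the hypotheses for $k$. By Van der Corput's Difference Theorem it suffices to prove, for each fixed positive integer $h$, that $g_h(n):=f(n+h)-f(n)$ is uniformly distributed mod $1$, and I would do this by checking that $g_h$ satisfies the hypotheses with $k$ replaced by $k-1$. Writing $g_h^{(k-1)}(x)=f^{(k-1)}(x+h)-f^{(k-1)}(x)$, monotonicity of $f^{(k)}$ shows that $g_h^{(k)}(x)=f^{(k)}(x+h)-f^{(k)}(x)$ has constant sign eventually, so $g_h^{(k-1)}$ is eventually monotonic; and the Mean Value Theorem gives $g_h^{(k-1)}(x)=h\,f^{(k)}(\xi_x)$ for some $\xi_x\in(x,x+h)$, whence $g_h^{(k-1)}(x)\to 0$. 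For the divergence condition, in the case where $|f^{(k)}|$ is decreasing (the other symmetric) one has $|f^{(k)}(\xi_x)|\ge |f^{(k)}(x+h)|$, so
\[
x\,|g_h^{(k-1)}(x)| = h\,x\,|f^{(k)}(\xi_x)| \ge h\cdot\frac{x}{x+h}\cdot(x+h)\,|f^{(k)}(x+h)| \longrightarrow \infty.
\]
Thus the induction hypothesis applies to each $g_h$, and Van der Corput's Difference Theorem completes the step.

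The main obstacle is the base case: everything collapses to the single-derivative exponential-sum bound, and the delicate point there is the interplay of the two competing hypotheses. The condition $f'(x)\to 0$ is what makes the distance from $f'$ to the integers equal to $|f'|$, so that Kusmin--Landau applies with a usable $\lambda=|f'(N)|$; and the condition $x\,|f'(x)|\to\infty$ is precisely what converts the resulting bound $O(1/|f'(N)|)$ into $o(N)$. The inductive step is comparatively mechanical, the only care being to confirm that differencing $f\mapsto g_h$ preserves the divergence condition, which is handled by the elementary estimate $\tfrac{x}{x+h}\to 1$ above.
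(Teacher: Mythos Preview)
The paper does not supply a proof of this theorem; it is quoted as Theorem~3.5 of \cite{UD} (Kuipers--Niederreiter) and used as a black box. Your argument is correct and is precisely the classical proof given in that reference: the base case $k=1$ via Weyl's Criterion together with the Kusmin--Landau first-derivative bound, and the inductive step by applying Van der Corput's Difference Theorem to $g_h(n)=f(n+h)-f(n)$ and verifying that $g_h^{(k-1)}$ inherits the three hypotheses.
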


Finally, we note a few basic facts regarding Benford spaces.
\\
\begin{lemma}
\text{                }
\begin{enumerate}

	\item{If $f(n)\in\mathcal{B}$ and $c\in\mathbb{R} - \{0\}$, then $cf(n)\in\mathcal{B}$.
		}
	\item{If $f(n)\in\mathcal{B}$ is nonzero, then $\frac{1}{f(n)}\in\mathcal{B}$.
	}
	\item{If $f(n)\in\mathcal{B}$ and $f(n)\sim g(n)$, then $g(n)\in\mathcal{B}$.
	}
\end{enumerate}
\end{lemma}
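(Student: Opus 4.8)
The plan is to prove the three statements of the lemma directly from the Diaconis reformulation, namely that $f(n)\in\mathcal{B}$ if and only if $\log_k|f(n)|$ is uniformly distributed mod $1$ for every integer $k\geq 2$. Each item then becomes a statement about how uniform distribution mod $1$ interacts with the operations in question, and the work reduces to elementary manipulations of fractional parts together with Weyl's criterion.

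\emph{Proof.}
By the result of Diaconis, a sequence $g(n)$ lies in $\mathcal{B}$ if and only if $\log_k|g(n)|$ is uniformly distributed mod $1$ for every integer $k\geq 2$; we use this characterization throughout. For (1), write $\log_k|cf(n)|=\log_k|c|+\log_k|f(n)|$. Translation by the fixed constant $\log_k|c|$ preserves uniform distribution mod $1$: indeed, if $x_n$ is uniformly distributed mod $1$ then for any fixed $\alpha\in\mathbb{R}$ and any nonzero $m\in\mathbb{Z}$ we have $\frac{1}{N}\sum_{n\le N}e^{2\pi i m(x_n+\alpha)}=e^{2\pi i m\alpha}\cdot\frac{1}{N}\sum_{n\le N}e^{2\pi i m x_n}\to 0$, so Weyl's criterion is satisfied for $x_n+\alpha$. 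Since $f(n)\in\mathcal{B}$ gives uniform distribution of $\log_k|f(n)|$ for all $k$, the same holds for $\log_k|cf(n)|$, hence $cf(n)\in\mathcal{B}$. For (2), note $\log_k\bigl|\tfrac{1}{f(n)}\bigr|=-\log_k|f(n)|$, and negation preserves uniform distribution mod $1$ (apply Weyl's criterion with $m$ replaced by $-m$, or observe that $x\mapsto -x$ is measure-preserving on $\mathbb{R}/\mathbb{Z}$). Thus uniform distribution of $\log_k|f(n)|$ for all $k$ yields the same for $-\log_k|f(n)|$, so $\tfrac{1}{f(n)}\in\mathcal{B}$; the nonvanishing hypothesis is exactly what is needed for $\tfrac{1}{f(n)}$ to be defined and for $\log_k|f(n)|$ to be finite.

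For (3), suppose $f(n)\sim g(n)$, so $g(n)/f(n)\to 1$; in particular $g(n)\neq 0$ for all sufficiently large $n$ and $\log_k|g(n)|-\log_k|f(n)|=\log_k|g(n)/f(n)|\to 0$ as $n\to\infty$. It is a standard fact that adding a sequence tending to $0$ to a sequence that is uniformly distributed mod $1$ leaves it uniformly distributed mod $1$: writing $y_n=x_n+\varepsilon_n$ with $\varepsilon_n\to 0$, we have $e^{2\pi i m y_n}-e^{2\pi i m x_n}=e^{2\pi i m x_n}(e^{2\pi i m\varepsilon_n}-1)\to 0$, so the Cesàro averages of $e^{2\pi i m y_n}$ and $e^{2\pi i m x_n}$ have the same limit, namely $0$, and Weyl's criterion applies. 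Since altering finitely many terms of a sequence does not affect uniform distribution mod $1$ either, the fact that $g(n)$ may vanish for small $n$ is harmless. Therefore $\log_k|g(n)|$ is uniformly distributed mod $1$ for every $k$, and $g(n)\in\mathcal{B}$. $\qed$

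I do not expect any genuine obstacle here: each part is a one-line reduction via Diaconis followed by an invariance property of uniform distribution mod $1$ that is immediate from Weyl's criterion. The only point requiring a small amount of care is (3), where one must check that $f(n)\sim g(n)$ forces the \emph{difference} of logarithms (not the ratio of the functions) to tend to $0$ and that the finitely many zeros of $g$ cause no trouble; both are routine. The main role of this lemma is organizational — it is what lets us pass freely between a "good" asymptotic $a(n)\sim b(n)e^{c(n)}$ and the cleaner target $c(n)/\log k$ when proving Theorem \ref{goodthm}, and between a modular form and a convenient normalization of it in Theorem \ref{mf}.
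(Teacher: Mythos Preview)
Your proof is correct and follows the same route the paper indicates: pass to uniform distribution of $\log_k$ via Diaconis and then check that translation, negation, and perturbation by a null sequence each preserve uniform distribution mod $1$. The only cosmetic difference is that the paper says (1) and (2) follow ``directly from the definition of uniform distribution'' while you invoke Weyl's criterion for all three; either works, and your treatment of (3) is exactly what the paper has in mind.
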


Note that (1) and (2) follow directly from the definition of uniform distribution and (3) follows easily using Weyl's Criterion.

\subsection{Proof of Theorem \ref{goodthm}}
Here we prove Theorem \ref{goodthm} using these results on uniform distribution.
\begin{proof}
First note that by Part 3 of the lemma, it suffices to show that $b(n)e^{c(n)}$ is Benford. This is equivalent to showing the uniform distribution of $\log(b(n)) + c(n)$ (mod 1). We see that $c(n)$ satisfies the limit conditions of Theorem \ref{vdk} by definition. By the order assumption on $D^{(h)}(\log (b(n)))$, the limits are unaffected upon adding $\log b(n)$.
\end{proof}
\section{Proof of Theorem \ref{mf}}
\subsection{Weakly Holomorphic Modular Forms}
Here we give some of the preliminary definitions and theorems on weakly holomorphic modular forms. For a more detailed reference, see \cite{KO}. Throughout this discussion let $q := e^{2\pi i z}$. 
\\
\indent
For any positive integer $N$, we define the \emph{level $N$ congruence subgroups of $\emph{SL}_2(\mathbb{Z})$} by
\\
$\Gamma_0(N):=\Big{\{} \Big(
\begin{matrix}
a&b\\
c&d
\end{matrix}
\Big)
\in \text{SL}_2(\mathbb{Z}) : c\equiv 0$ mod N$\Big{\}}$
\\
$\Gamma_1(N):=\Big{\{} \Big(
\begin{matrix}
a&b\\
c&d
\end{matrix}
\Big)
\in \text{SL}_2(\mathbb{Z}) : a \equiv d \equiv 1$ mod $N$ and $ c\equiv 0$ mod N$\Big{\}}$
\\
$\Gamma(N):=\Big{\{} \Big(
\begin{matrix}
a&b\\
c&d
\end{matrix}
\Big)
\in \text{SL}_2(\mathbb{Z}) : a \equiv d \equiv 1$ mod $N$ and $ b\equiv c \equiv 0$ mod N$\Big{\}}$

Suppose that $\Gamma$ is a congruence subgroup of $\text{SL}_2(\mathbb{Z})$. A \emph{cusp} is an equivalence class of $\mathbb{P}^1(\mathbb{Q})$ under the usual fractional linear action of $\Gamma$.
\\

Let $f(z)$ be a meromorphic function on the upper half plane $\mathbb{H}$, $k\in \mathbb{Z}$, and $\Gamma$ be a congruence subgroup of level $N$. First, we define the ``slash" operator by
\[
(f|_k \gamma)(z) := (\det{\gamma})^{k/2}(cz+d)^{-k}f(\gamma z),
\]
where $\gamma\in\text{SL}_2(\mathbb{Z})$ and $\gamma z := \frac{az+b}{cz+d}$.

Then $f(z)$ is said to be a \emph{meromorphic modular form with integer weight k on} $\Gamma$ if the following hold:
\begin{enumerate}
\item{
For all $z\in\mathbb{H}$ and all $ \Big( \begin{matrix}
a&b\\
c&d
\end{matrix} \Big) \in \Gamma$ we have that 
\[
f\Big(\frac{az+b}{cz+d}\Big) = (cz+d)^kf(z).
\]
}
\item{
If $\gamma_0\in\text{SL}_2(\mathbb{Z})$, then  $(f|_k \gamma_0)(z)$ has a Fourier expansion of the form
\[
(f|_k \gamma_0)(z) = \displaystyle\sum_{n\geq n_{\gamma_0}}a_{\gamma_0}(n)q^{n/N}.
\]
}
\end{enumerate}
If $k=0$, then $f(z)$ is known as a \emph{modular function on} $\Gamma$.

 We say that $f(z)$ is a \emph{weakly holomorphic modular form} if its poles are supported at the cusps of $\Gamma$. 
\\
\indent
Here we recall the notion of a modular form of half-integral weight. First we define $\Big( \frac{c}{d}\Big)$ and $\epsilon_d$. If $d$ is an odd prime, let $\Big( \frac{c}{d}\Big)$ be the usual Legendre symbol. For positive odd $d$, define $\Big( \frac{c}{d}\Big)$ by multiplicativity. For negative odd $d$, we let
\[
\Big( \frac{c}{d}\Big):=
\begin{cases}
\Big( \frac{c}{|d|}\Big) & \text{if } d<0 \text{ and } c>0,\\
-\Big( \frac{c}{|d|}\Big) & \text{if } d<0 \text{ and } c<0.
\end{cases}
\]
Also let $\Big( \frac{0}{\pm 1}\Big)=1$. Define $\epsilon_d$, for odd $d$, by
\[
\epsilon_d :=
\begin{cases}
1 & \text{if } d\equiv 1 \text{ mod } 4,\\
\emph{i} & \text{if } d \equiv 3 \text{ mod } 4.
\end{cases}
\]

Let $\lambda$ be a nonnegative integer and $N$ a positive integer. Furthermore, suppose that $\chi$ is a Dirichlet character modulo $4N$. A meromorphic function $g(z)$ on $\mathbb{H}$ is said to be a \emph{modular form with Nebentypus $\chi$ and weight $\lambda + \frac{1}{2}$} if it is meromorphic at the cusps of $\Gamma$, and if 
\[
g\Big(\frac{az+b}{cz+d}\Big) = \chi(d) \Big( \frac{c}{d}\Big)^{2\lambda + 1} \epsilon_d^{-1-2\lambda} (cz+d)^{\lambda + \frac{1}{2}} g(z)
\]
for all $\Big(
\begin{matrix}
a&b\\
c&d
\end{matrix}
\Big)
\in \Gamma_0(4N).$

\noindent\emph{Remark.} As in the integral case, we say that a meromorphic modular form $M(z)$ is \emph{weakly holomorphic} if its poles are supported on the cusps (that is, $M$ is holomorphic on $\mathbb{H}$).

For example, consider Dedekind's eta-function,
\[
\eta(z) := q^{1/24}\displaystyle\prod_{n=1}^{\infty}(1-q^n),
\]
which is a non-vanishing, weakly holomorphic modular form with weight 1/2. The following theorem becomes useful in generating more examples of modular forms (see for example, \cite{KO}).
\begin{thm}
Let $f(z) = \prod_{\delta|N}\eta(\delta z)^{r_\delta}$ be an eta-quotient with $k=\frac{1}{2}\sum_{\delta|N}r_{\delta}\in\mathbb{Z}$. Then if 
\[
\displaystyle\sum_{\delta|N}\delta r_{\delta} \equiv 0 \emph{ (mod 24)}
\]
and
\[
\displaystyle\sum_{\delta|N}\frac{N}{\delta} r_{\delta} \equiv 0 \emph{ (mod 24)},
\]
then $f(z)$ satisfies
\[
\Big(\frac{az+b}{cz+d}\Big) = \chi(d)(cz+d)^kf(z)
\]
for every $\Big(
\begin{matrix}
a&b\\
c&d
\end{matrix}
\Big) \in \Gamma_0(N)$. Here the character $\chi$ is defined by $\chi(d) := \big(\frac{(-1)^{k}s)}{d}\big)$, where $s := \prod_{\delta|N}\delta^{r_{\delta}}$.
\end{thm}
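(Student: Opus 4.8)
The plan is to reduce the stated transformation law for $f$ to the classical transformation formula for Dedekind's eta-function under $\mathrm{SL}_2(\mathbb{Z})$. Recall that for $\gamma=\left(\begin{smallmatrix}a&b\\ c&d\end{smallmatrix}\right)\in\mathrm{SL}_2(\mathbb{Z})$ with $c>0$ one has
\[
\eta(\gamma z)=\varepsilon(a,b,c,d)\,(cz+d)^{1/2}\,\eta(z),
\]
where $\varepsilon(a,b,c,d)$ is the $24$th root of unity of the eta multiplier system; after separating the cases $c$ odd and $c$ even (in the latter $d$ is automatically odd), $\varepsilon$ has the explicit shape of a Jacobi symbol, $\left(\tfrac{d}{c}\right)$ or $\left(\tfrac{c}{d}\right)$, times a factor $\exp\!\big(\tfrac{\pi i}{12}P(a,b,c,d)\big)$ with $P$ an explicit integral polynomial (see \cite{KO}). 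The hypothesis $k\in\mathbb{Z}$ makes $\sum_{\delta\mid N}r_\delta$ even, so that $(cz+d)^{\sum_\delta r_\delta/2}=(cz+d)^{k}$ is single valued. Since an element of $\Gamma_0(N)$ with $c=0$ is $\pm\left(\begin{smallmatrix}1&b\\ 0&1\end{smallmatrix}\right)$ and $f(z+1)=\exp\!\big(\tfrac{\pi i}{12}\sum_{\delta\mid N}\delta r_\delta\big)f(z)$, the congruence $\sum_{\delta\mid N}\delta r_\delta\equiv 0\ (\mathrm{mod}\ 24)$ disposes of those matrices at once (after matching signs against the convention for $\left(\tfrac{c}{d}\right)$ at negative $d$); by that same convention together with $f(-\gamma z)=f(\gamma z)$, it then suffices to verify the transformation law for a fixed $\gamma\in\Gamma_0(N)$ with $c>0$.

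The device that makes the computation manageable is that for every $\delta\mid N$,
\[
\eta(\delta\,\gamma z)=\eta\big(\gamma_\delta\,(\delta z)\big),\qquad \gamma_\delta:=\begin{pmatrix}a&\delta b\\ c/\delta&d\end{pmatrix}\in\mathrm{SL}_2(\mathbb{Z}),
\]
which is legitimate because $\delta\mid N\mid c$, $\det\gamma_\delta=ad-bc=1$, and the lower-left entry $c/\delta$ of $\gamma_\delta$ is positive. Applying the eta transformation formula to $\gamma_\delta$ acting on $\delta z$ yields $\eta(\delta\,\gamma z)=\varepsilon(a,\delta b,c/\delta,d)\,(cz+d)^{1/2}\,\eta(\delta z)$. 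Raising to the power $r_\delta$, multiplying over all $\delta\mid N$, and using $k=\tfrac12\sum_\delta r_\delta$, we get
\[
f(\gamma z)=\Bigg(\prod_{\delta\mid N}\varepsilon(a,\delta b,c/\delta,d)^{\,r_\delta}\Bigg)(cz+d)^{k}f(z),
\]
so the theorem comes down to the single identity $\prod_{\delta\mid N}\varepsilon(a,\delta b,c/\delta,d)^{r_\delta}=\chi(d)$.

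To prove this identity I would substitute the explicit formula for $\varepsilon$ and split the product into its root-of-unity part and its Jacobi-symbol part, treating each $\delta$ according to the parity of $c/\delta$. In the exponent of the root-of-unity part the dependence on $\delta$ is polynomial of low degree in the entries, and upon collecting terms it is governed modulo $24$ precisely by the two sums $\sum_{\delta\mid N}\delta r_\delta$ and $\sum_{\delta\mid N}(N/\delta)r_\delta$ — the latter arising through the entries $c/\delta=(c/N)(N/\delta)$ — so both hypotheses together force this part to equal $1$. The Jacobi-symbol part is a product of symbols $\left(\tfrac{d}{c/\delta}\right)^{r_\delta}$ and $\left(\tfrac{c/\delta}{d}\right)^{r_\delta}$; using multiplicativity, quadratic reciprocity together with the supplementary laws to flip the ``wrong-way'' symbols, the definition $s=\prod_{\delta\mid N}\delta^{r_\delta}$, and the two congruences plus the parity of $\sum_\delta r_\delta=2k$ to absorb the resulting sign $(-1)^k$ and powers of $2$, this collapses to $\left(\tfrac{(-1)^{k}s}{d}\right)=\chi(d)$. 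As an alternative bookkeeping device for the second congruence, one may note that the Fricke involution $z\mapsto -1/(Nz)$ carries $f$ to a constant multiple of the eta-quotient $\prod_{\delta\mid N}\eta((N/\delta)z)^{r_\delta}$, for which the analogue of the first congruence is exactly the second congruence for $f$.

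The step I expect to be the main obstacle is precisely this last calculation with the eta multiplier system: keeping scrupulous track of the parity cases ($c/\delta$ odd versus even), of the sign and power-of-$2$ contributions coming from the supplementary laws of quadratic reciprocity, and of the roots of unity carried by the $\exp(\pi i P/12)$ factors, so that the full product telescopes into the single symbol $\chi(d)$. This is intricate but wholly classical — it uses nothing beyond the eta transformation formula and elementary facts about the (sign-extended) Jacobi symbol — and a complete treatment can be found in \cite{KO}.
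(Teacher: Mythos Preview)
The paper does not prove this theorem at all: it is stated as a background result with the parenthetical ``see for example, \cite{KO}'' and is then used as a black box in the proof of Corollary~1.4. So there is no ``paper's own proof'' to compare against.

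Your sketch is the standard classical argument (essentially the Gordon--Hughes/Newman proof that appears in the cited reference): rewrite $\eta(\delta\,\gamma z)$ as $\eta(\gamma_\delta(\delta z))$ with $\gamma_\delta=\left(\begin{smallmatrix}a&\delta b\\ c/\delta&d\end{smallmatrix}\right)\in\mathrm{SL}_2(\mathbb{Z})$, apply the explicit eta multiplier to each factor, and then reduce the product $\prod_{\delta\mid N}\varepsilon(a,\delta b,c/\delta,d)^{r_\delta}$ to $\chi(d)$ by combining the two congruence hypotheses (which kill the $24$th-root-of-unity part) with quadratic reciprocity and the supplementary laws (which collapse the Jacobi-symbol part). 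The outline is correct and complete in spirit; as you yourself note, the only genuinely delicate step is the bookkeeping in that last multiplier computation, and a full treatment is exactly what \cite{KO} supplies. In short, you have reproduced the proof that the paper is content merely to cite.
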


\subsection{Proof of Theorem \ref{mf}}
We are now in position to prove Theorem \ref{mf}.
\begin{proof}
We first consider the case where $k$ is nonpositive. By hypothesis, $M$ has at least one pole at a cusp of the relevant subgroup $\Gamma \subseteq \text{SL}_{2}(\mathbb{Z})$. Recall that a harmonic Maass form is a function defined in a similar manner as a modular form, but with relaxed growth conditions and the additional requirement that it lies in the kernel of a certain weight $k$ Laplacian operator. Every such harmonic Maass form can be decomposed as a sum of a holomorphic part $f^{(+)}$ and a nonholomorphic part $f^{(-)}$. It is well-known that every weakly holomorphic modular form is a harmonic Maass form. In addition,the nonholomorphic part $f^{(-)}$ is identically zero. By a recent paper of Bringmann and Ono \cite{KK2}, we have exact formulas for the holomorphic part of harmonic Maass forms for $k\leq \frac{1}{2}$. Because in our setting $f^{(-)}$ is identically zero, these give us exact formulas for the Fourier coefficients of $M$. The explicit formulas are quite complicated, so for brevity note that if we write $M:=\sum_{n\geq n_{0}}m(n)q^{n}$, it is easy to show that there exists an integer $t\geq 1$ such that for all $0\leq r \leq t$ the nonzero coefficients satisfy
\[
m(tn+r) \sim K(M,r,t) \cdot (tn+r)^{\frac{k-1}{2}} \cdot I_{1-k}(\alpha (M,r,t) \sqrt{tn+r})
\]
where $K(M,r,t)$ and $\alpha (M,r,t)$ are constants and $I$ denotes the modified Bessel function of the first kind. Using the standard asymptotics for Bessel functions, namely that
\[
I_{\alpha}(x) \sim \frac{e^x}{\sqrt{2\pi x}}\cdot \Big(1+\frac{(1-2\alpha)(1+2\alpha)}{8x}+\dots\Big),
\]
 we see that the sequence of nonzero coefficients is good and hence Benford. 
\\
\indent If $k=1/2$, then by Theorem 1.2 in \cite{KK2}, we have a new exact formula of the same form with the addition of a finite number of terms which do not affect the asymptotic. 
\end{proof}
\noindent\emph{Remark.} Although we have stated and proved Theorem 1.3 for weights $k \leq \frac{1}{2}$, it turns out that a suitably modified version holds for all weights.  Let $M(z)$ be a weakly holomorphic modular form with weight $k > \frac{1}{2}$  with integral coefficients and at least one pole, which by definition must be at a cusp.  We can decompose $M(z) = P(z)+H(z),$ where $P(z)$ is a linear combination of Maass-Poincar\'{e} series and $H(z)$ is a holomorphic modular form.  Note that by \cite{KK2} this decomposition is stated only for the case $1/2 \geq k\in\frac{1}{2}\mathbb{Z},$ but in fact it will hold for all $k\in\frac{1}{2}\mathbb{Z}.$  When $k\leq 0,$ it follows that $H(z)=0,$ and when $k = 1/2,$ $H(z)$ is a linear combination of readily understood theta functions by the Serre-Stark basis theorem \cite{KK2}.  Therefore, when $k\leq \frac{1}{2}$ this decomposition gives exact formulas for the coefficients.  For other $k,$ we do not have a good theory for the possible $H(z),$ therefore we typically cannot obtain exact formulas for the coefficients of $M(z)$.  However, it is well-known that the coefficients of all such $H(z)$ are bounded by a fixed power of $n$. To see this, note that the space of holomorphic modular forms is spanned by Eisenstein series and cusp forms. From the expansion of Eisenstein series in terms of divisor functions, it is easily seen that the coefficients are bounded by a polynomial. Cusp forms are also bounded by a power of $n$ by Proposition 8 on page 23 of \cite{1-2-3}. Consequently, we obtain sufficient asymptotics for all coefficients which are nonvanishing in $P(z).$  These asymptotics will have subexponential growth, which will dominate the coefficients from $H(z)$.\\\\
\emph{Example.}
To illustrate the above remark, consider the weight 12 weakly holomorphic modular form $M(z):=j(z)\cdot E_4(z)^3$, where $j(z)$ is the $j$-modular invariant and $E_4(z)$ is the Eisenstein series of order 4.  The first few terms are 
\[
M(z) = q^{-1}+1464 +911844q+313589120q^2+\cdots
\]
This function has a pole at $\infty$ so the above remark applies as is illustrated in the base 3 case below.
\begin{table}[ht]
\caption{$B(d,x,3;j(z)\cdot E_4(z)^3$)} 
\scriptsize
\begin{tabular}{c c c c c c c }  
\hline\hline                        
$x$ & $d$ = 10 & $d$=11 & $d$=12 &$d$=20 &$d$=21 &$d$=22\\ [0.5ex] 
\hline                    
500&.2440&.2020&.1700&.1320&.1300&.1220\\
1000&.2590&.2050&.1610&.1360&.1270&.1120\\
1500&.2627&.2027&.1633&.1373&.1273&.1067\\
2000&.2635&.2030&.1660&.1375&.1245&.1055\\
$\downarrow$ & $\downarrow$ & $\downarrow$ & $\downarrow$ & $\downarrow$ & $\downarrow$& $\downarrow$\\
$\infty$?&.2619&.2031&.1660&.1403&.1215&.1072\\

\hline     
\end{tabular}
\label{table:nonlin}  
\end{table}

\section{Proof of Corollaries 1.4 and 1.5}
In this section we use Theorem 1.1 to prove new examples of Benford sequences in interesting cases.
\subsection{Proof of Corollary 1.4}
First note that 
\[
\sum_{n=0}^{\infty}b_s(n)q^n=\prod_{n=1}^{\infty} \frac{1-q^{sn}}{1-q^n}.
\]
Then in terms of Dedekind's eta function, $\eta (z),$ we have that
\[
\sum_{n=0}^{\infty}b_{s}(n)q^{24n+s-1}=\frac{\eta (24sz)}{\eta (24z)}.
\]
\indent Then by Theorem 1.64 \cite{KO}, we see that $b_{s}(n)$ is obtained as the coeffients of a nonconstant modular form of weight zero on $\Gamma_0(576s)$. Moreover, as $\eta (z)$ is nonvanishing on $\mathbb{H}$, the given modular form is weakly holomorphic and hence $b_{s}(n)$ is Benford.
\subsection{Proof of Corollary 1.5}
It is easy to see that
\[
\sum_{n=0}^{\infty}r_{g,\delta}(n)q^{n}=\prod_{1\leq n \equiv g \text{ (mod } \delta)} \frac{1}{(1-q^{n})}\prod_{1\leq n \equiv -g \text{ (mod } \delta)} \frac{1}{(1-q^{n})}.
\]
\indent
For example, we have the Rogers-Ramanujan functions
\[
\sum_{n=0}^{\infty}\frac{q^{n^{2}+an}}{(q;q)_n}=\prod_{n=0}^{\infty}\frac{1}{(1-q^{5n+a+1})(1-q^{5n+4-a})},
\]
for $a=0,1$.
\\
\indent
Our claim is that for all $g$ and $\delta$ satisfying the above restrictions $r_{g,\delta}$ is Benford. To see this, define the \emph{generalized Dedekind Eta-product} $\eta_{g,\delta}(z)$ by 
\[
\eta_{g,\delta}(z):=e^{2\pi i P_{2}(\frac{g}{\delta})\delta z}\prod_{1\leq n \equiv g \text{ (mod } \delta)} (1-q^{n})\prod_{1\leq n \equiv -g \text{ (mod } \delta)}(1-q^{n}).
\]
\indent
Here $P_2(t)$ is the second Bernoulli function $P_2(t):=\{ t\}^2 - \{ t\}+\frac{1}{6}$, and $\{ t\}$ denotes the fractional part of $t$. Then by \cite{GEP}, for $g\neq0,\frac{1}{2}\delta$, it follows that $\eta_{g,\delta}(z)$ is a modular form of weight zero which is nonvanishing on $\mathbb{H}$. The generating functions for $r_{g,\delta}$ are then given by the Fourier expansion of $\frac{1}{\eta_{g,\delta}(z)}$ shifted by an integral power of $q$. As this is a nonconstant weakly holomorphic modular form of weight zero, it follows that $r_{g,\delta}(n)$ is Benford.

\end{document}